\documentclass[10pt]{article}
\usepackage{mathrsfs}
\usepackage{amsfonts}
\usepackage{amsthm,amsmath,amssymb,anysize}
\newtheorem{lemma}{Lemma}[section]
\newtheorem{theorem}[lemma]{Theorem}

\newtheorem{proposition}[lemma]{Proposition}
\newtheorem{definition}[lemma]{Definition}

\setlength{\parindent}{1em} \setlength{\baselineskip}{20pt}
\usepackage[numbers,sort&compress]{natbib}
\pagestyle{myheadings}
\marginsize{3.6cm}{3.6cm}{3.6cm}{3cm}
\numberwithin{equation}{section}

\markright{Multiplicative Hom-Lie superalgebra structures }
\title{\textsf{Multiplicative Hom-Lie superalgebra structures on infinite dimensional simple
Lie superalgebras of vector fields}}
\author{\textsc{Jixia Yuan$^{1,}$}\footnote{Supported by the NSF
  of  HLJ Provincial Education Department, China (12521158)}\;\;
\textsc{Liping Sun$^{2,}$}\footnote{Supported by the NSF  of HLJ
Provincial Education Department (12511349), China}\textsc{ and Wende
Liu$^{3,}$}\footnote{Corresponding author. Email:
\texttt{wendeliu@ustc.edu.cn}.\;\;Supported by the NSF
  of China (11171055)}\;  \\
  \\
  \textit{$^{1}$School of Mathematical Sciences},
  \textit{Heilongjiang University} \\
  \textit{Harbin 150080, China}\\
\\
  \ \ \textit{$^{2}$School of Applied Sciences},
  \textit{Harbin University of Science and Technology}\\
  \textit{Harbin 150080, China}\\
  \\
    \ \ \textit{$^{3}$School of Mathematical Sciences},
  \textit{Harbin Normal University} \\
  \textit{Harbin 150025, China}
  }
\date{ }
\begin{document}
\maketitle
\begin{quotation}
\noindent\textbf{Abstract} This paper considers  the multiplicative Hom-Lie
superalgebra structures on infinite dimensional simple
Lie superalgebras of vector fields with characteristic zero. The main
result is that there is only the  multiplicative Hom-Lie superalgebra structure on these
Lie superalgebras.\\

 \noindent \textbf{Keywords}:  Lie superalgebra, Hom-Lie superalgebra structure, automorphism \\

\noindent \textbf{MSC 2000}: 17B40,
 17B66

  \end{quotation}

  \setcounter{section}{-1}
\section{Introduction}

\noindent Hom-Lie algebra structures were introduced and studied in \cite{hls,ls1,ls2,ls3}. In 2008, Q. Jin and X. Li gave a description of Hom-Lie algebra structures of  Lie algebras and determined the isomorphic classes of nontrivial Hom-Lie algebra structures of  finite dimensional semi-simple  Lie algebras  \cite{jl}. The Hom-Lie algebras  have been sufficiently studied
in \cite{bm,s}.

 The theory of Lie superalgebras has  seen a significant
development. For example, V. G. Kac classified the finite
dimensional simple Lie superalgebras and the infinite dimensional
simple linearly compact Lie superalgebras over algebraically closed
fields of characteristic zero \cite{k1,k2}. In 2010, F. Ammar
and A. Makhlouf  generalized Hom-Lie algebras to Hom-Lie
superalgebras \cite{am}. In 2012, B. T. Cao and L. Luo proved
that there is only the  trivial Hom-Lie superalgebra structure on a
finite dimensional  simple  Lie  superalgebra of characteristic zero  \cite{cl}.

 This paper is motivated by the results and methods relative to   finite dimensional  simple  Lie  superalgebras with characteristic zero (cf. \cite{cl}).
 In Section 1 the notations of infinite dimensional simple
Lie superalgebras of vector fields were introduced. In Section 2 the multiplicative Hom-Lie superalgebra structures on  infinite dimensional simple
Lie superalgebras of vector fields were studied.
We proved that there is only the trivial multiplicative Hom-Lie superalgebra structure
 on infinite dimensional simple  Lie superalgebras of vector fields.

\section{Preliminaries}

Throughout $\mathbb{F}$ is a field of characteristic zero.
$\mathbb{Z}_2:= \{\bar{0},\bar{1}\}$ is the additive group of two
elements. $\mathbb{N}$ and $\mathbb{N}_0$ are the sets of positive
integers and nonnegative integers, respectively. $\mathbb{F}[x_{1},
\ldots, x_{m}]$ denotes the  polynomial algebra over $\mathbb{F}$ in
even indeterminates $x_{1}, x_{2},\ldots, x_{m},$ where $m>3$. For
positive integers $n>3,$ let $\Lambda(n)$ be the Grassmann
superalgebra over $\mathbb{F}$ in the $n$ odd indeterminates
$x_{m+1},x_{m+2},\ldots,x_{m+n}$.
  Clearly,
$$\Lambda(m,n):=\mathbb{F}[x_{1}, \ldots, x_{m}]\otimes
\Lambda(n).$$
is an associative commutative superalgebra.

 Let
$\partial_r$ be the superderivation of
$\Lambda(m,n) $ defined by
$\partial_{r}(x_{s})=\delta_{rs}$ for $r,s\in
\overline{1, m+n}.$ The \textit{generalized Witt superalgebra} $
W\left(m,n\right)$ is  $\mathbb{F}$-spanned by
$\{f_r
\partial_r\mid f_r\in \Lambda(m,n), r\in
\overline{1,n+m}\}.$   Note that $W(m,n) $ is a free $
\Lambda \left(m,n\right)$-module with basis $ \{
\partial_r\mid r\in
 \overline{1,m+n}\}.$

 For a   vector superspace $V=V_{\bar{0}}\oplus
V_{\bar{1}}, $ we write $|x|:=\theta$ for the \textit{parity} of a
homogeneous element $x\in V_{\theta}, $ $\theta\in \mathbb{Z}_{2}.$
Once the symbol $|x|$ appears, it
 will imply that $x$ is  a $\mathbb{Z}_2$-homogeneous element.

 The following symbols will be frequently used in this paper:
\begin{itemize}
  \item  $i_{H}^\prime=i_{K}^\prime:=\left\{
\begin{array}{lll}
i+r, &if\quad  1\leq i\leq r\\
 i-r,  & if\quad r<i\leq 2r\\
 i, & if\quad i\in \overline{m+1,m+n},
\end{array}
\right. \;\;\tau(i):=\left\{
\begin{array}{lll}
1, &if\quad  1\leq i\leq r\\
-1, & if\quad r<i\leq 2r\\
1, &if\quad i\in \overline{m+1,m+n},
\end{array}
\right.$ for  $m=2r$ or $m=2r+1;$
\item $i'_{X}:= \left\{\begin{array}{ll} i+m, &\mbox{if}\;
i\in\overline{1, m}
\\i-m, &\mbox{if}\; i\in \overline{m+1, 2m},
\end{array}\right.$ where $X=HO, KO, SHO$ and $SKO;$
\item $
\mathrm {div} ( f_{r}\partial_{r} )=(-1) ^{|\partial_r|
|f_{r}|}
\partial_{r}(f_{r}),
$ where $\mathrm {div}$ is a linear mapping from $W(m,n)$ to
$\Lambda(m,n);$
 \item $
\mathrm{div}_{\lambda}(f):=
(-1)^{|f|}2\left(\sum\limits_{i=1}^{m}\partial_i\partial_{i'_{SKO}}\left(f\right)+\left(\mathfrak{D}-m\lambda\mathrm{id}_{\Lambda(m,m+1)}\right)
\partial_{2m+1}\left(f\right)\right),
$ where $f\in
 \Lambda(m,m)$ and $\lambda\in \mathbb{F};$
\item $\mathrm{D}_{ij}(f):=(-1)^{|\partial_i||\partial_j|}\partial_i(f)\partial_j-(-1)^{(|\partial_i|+|\partial_j|)|f|}
\partial_j(f)\partial_i,$ where $f\in
 \Lambda(m,m);$
\item $
\mathrm{D_H}(f):=\sum\limits_{i=1}^{m+n}\tau(i)(-1)^{|\partial_i||f|}\partial_i(f)\partial_{i_{H}^\prime},
$ where $m=2r$ and $f\in
 \Lambda(m,m);$
\item $
\mathrm{ D_K}(f):=\sum\limits_{m\neq i=1}^{m+n}(-1)^{|\partial_i||f|}\left(x_i\partial_m(f)+\tau(i_{K}^\prime)\partial_{i_{K}^\prime}(f)\right)
\partial_i\\
+\Big(2f-\sum\limits_{m\neq i=1}^{m+n}x_i\partial_i(f)\Big)\partial_m,
$ where $m=2r+1$ and $f\in
 \Lambda(m,m);$
\item $\mathrm{D_{HO}}(f):=\sum\limits_{i=1}^{2m}(-1)^{|\partial_i||f|}\partial_i(a)\partial_{i'_{HO}},$
where $f\in
 \Lambda(m,m);$
 \item $\mathrm{D_{KO}}(f):=\mathrm{\mathrm{D_{HO}}}(f)+(-1)^{|a|}\partial_{2m+1}(f)\mathfrak{D}+ (\mathfrak{D}(f)-2f )\partial_{2m+1},\quad
\mathfrak{D}:=\sum\limits_{i=1}^{2m}x_{i}\partial_{i},$  where $f\in
 \Lambda(m,m);$
 \item $\nu:=\delta_{X,K}m+\delta_{X,KO}(2m+1)+\delta_{X,SKO}(2m+1).$
\end{itemize}

The following infinite dimensional Lie superalgebras of vector fields,
which are the simple Lie superalgebra contained in $W(m,n),$ are
defined as follows (cf. \cite{k2}):
\begin{itemize}
  \item $S\left(m,n \right):={\rm span}_{\mathbb{F}}\left\{\mathrm{D_{ij}}(f)\mid
f\in \Lambda\left(m,n \right), i,j\in   \overline{1,m+n}\right\};$
  \item $H\left(m,n\right):=\left\{\mathrm{D_H}(f)\mid f\in
\Lambda\left(m,n\right)\right\};$
  \item $K\left(m,n\right):=\left\{\mathrm{D_K}(f) \mid f\in \Lambda\left(m,n\right)\right\};$
  \item $HO\left(m,m\right):=\{\mathrm{D_{HO}}(f)\,|\,f\in\Lambda(m,m)\};$
  \item $
KO(m,m+1):=\{\mathrm{D_{KO}}(f)\mid f\in
\Lambda(m,m+1)\};
$
\item $
SHO(m,m):=[SHO'(m,m),SHO'(m,m)],
$ where
\\$
 SHO'(m,m):=\{D\in HO(m,m)\mid
\mathrm{div}(D)=0\};
$
\item $
SKO(m,m+1;\lambda):=
[SKO'(m,m+1;\lambda),
SKO'(m,m+1;\lambda)],$ where \\
$SKO'(m,m+1;\lambda):=\{\mathrm{D_{KO}}(f)\mid
\mathrm{div}_{\lambda}(f)=0, f\in \Lambda(m,m+1)\}$ and $\lambda\in \mathbb{F}.$
\end{itemize}
Hereafter,  write  $X$ for $W,$ $S,$ $H$, $K$, $HO$, $KO,$ $SHO$ or
$SKO.$
\section{Multiplicative Hom-Lie superalgebra}
\begin{definition}
A multiplicative Hom-Lie superalgebra is a triple $(\mathfrak{g},[,],\sigma)$ consisting of a $\mathbb{Z}_{2}$-graded vector space $\mathfrak{g}$, a bilinear map
$[,]:\mathfrak{g}\times\mathfrak{g}\longrightarrow \mathfrak{g}$ and an even linear map $\sigma:\mathfrak{g}\longrightarrow\mathfrak{g}$ satisfying
\begin{eqnarray}\label{e1}
&& \sigma[x,y]=[\sigma(x),\sigma(y)],\\\nonumber
&& [x,y]=-(-1)^{|x||y|}[y,x],\\
&&(-1)^{|x||z|}[\sigma(x),[y,z]]+(-1)^{|y||x|}[\sigma(y),[z,x]]+(-1)^{|z||y|}[\sigma(z),[x,y]]=0,\label{e3}
\end{eqnarray}
where $x, y$ and $z$ are homogeneous elements in $\mathfrak{g}.$
\end{definition}
For any simple Lie superalgebra $\mathfrak{g},$ denote its Lie bracket by
$[,]$  and take an even linear map
$\sigma:\mathfrak{g}\longrightarrow\mathfrak{g}.$ We say
$(\mathfrak{g},\sigma)$ is a multiplicative Hom-Lie superalgebra structure over the
Lie superalgebra $\mathfrak{g}$ if $(\mathfrak{g},[,],\sigma)$ is a multiplicative
Hom-Lie superalgebra. Suppose $\sigma\neq0 $.  Eq. (\ref{e1}) and the
simplicity of $\mathfrak{g}$ show that $\sigma$ is a monomorphism of
$\mathfrak{g}.$ In particularly, if $\sigma=\mathrm{id}$  or $\sigma=0$, the multiplicative
Hom-Lie superalgebra structure is called trivial.
Before consider the multiplicative Hom-Lie superalgebra structures on $X(m,n),$ we
introduce the gradations on them as in \cite{k2}. For any
$(m+n)$-tuple $\underline{\alpha}:=(\alpha_{1},\ldots,\alpha_{m}\mid \alpha_{m+1},\ldots,\alpha_{m+n})\in
\mathbb{N}^{m+n},$ we may define a gradation on $W(m,n)$ by letting $\mathrm{deg}x_{i}:=\alpha_{i}=:-\mathrm{deg}\partial_{i},$ where $i\in \overline{1,m+n}.$ Thus $W(m,n)$   becomes a graded Lie superalgebra of finite depth, i.e., we have $$W(m,n)=\oplus_{j=-h}^{\infty}W(m,n)_{\underline{\alpha},[j]},$$
 where $h$ is a positive integer.
 Put
$$\underline{\gamma}:=\underline{1}+\delta_{X,K}\varepsilon_{m}+\delta_{X,KO}\varepsilon_{2m+1}+\delta_{X,SKO}\varepsilon_{2m+1}\in \mathbb{N}^{m+n}$$
and
sometimes  omit the subscript
$\underline{\gamma}$. Putting
$$X(m,n)_{\underline{\gamma},[i]}:=X(m,n)\cap W(m,n)_{\underline{\gamma},[i]},$$ one sees that $X(m,n)$ is graded by
$(X(m,n)_{\underline{\gamma},[i]})_{i\in \mathbb{Z}}.$
In particular,
\begin{itemize}
  \item $X(m,n)_{[-2]}=\mathbb{F}\cdot \mathrm{D_{X}}(1),$ where $X:=K, KO$ or $SKO;$
  \item $X(m,n)_{[-1]}=\mathrm{span}_{\mathbb{F}}\{\partial_{i}\mid
i\in \overline{1,m+n}\},$ where $X:=W$ or $S;$
  \item $X(m,n)_{[-1]}=\mathrm{span}_{\mathbb{F}}\{\mathrm{D_{X}}(x_{i})\mid
\nu\neq i\in \overline{1,2n}\},$ where $X:=H, K, HO, KO, SHO$ or $SKO;$
  \item $W(m,n)_{[0]}=\mathrm{span}_{\mathbb{F}}\{x_{i}\partial_{j}\mid i,j\in \overline{1,m+n}\};$
  \item $S(m,n)_{[0]}=\mathrm{span}_{\mathbb{F}}\{x_{i}\partial_{j}, x_{i}\partial_{i}-x_{j}\partial_{j}\mid i\neq j\in \overline{1,m+n}\};$
  \item $X(m,n)_{[0]}=\mathrm{span}_{\mathbb{F}}\{\mathrm{D_{H}}(x_{i}x_{j}), \delta_{X, K}\mathrm{D_{H}}(x_{m}), \delta_{X, KO}\mathrm{D_{H}}(x_{2m+1})\mid i, j\in \overline{1,m+n}\},$ where $X:=H, K, HO$ or $KO;$
  \item $X(m,n)_{[0]}=\mathrm{span}_{\mathbb{F}}\{
\mathrm{D_{X}}(x_{i}x_{j}), \mathrm{D_{X}}(x_{i}x_{i'}-x_{j}x_{j'}),
\mathrm{D_{X}}(x_{2n+1}+\delta_{X,SKO}n\lambda x_{i}x_{i'})\mid i\neq j'\in
\overline{1,m+n}\},$ where $X:=SHO$ or $SKO.$
\end{itemize}

Next we  give an equation and several lemmas needed in the sequel.
 The verifications  are
straightforward. The equation will be used without notice: for $f,
g\in \Lambda(m,n)$,
\begin{eqnarray*}
\left[\mathrm{D_{X}}(f),\mathrm{D_{X}}(g)\right]=\mathrm{D_{X}}\left(\mathrm{D_{X}}\left(f\right)\left(g\right)
-2\left(\delta_{X,K}-(-1)^{|f|}\delta_{X,KO}\right)\partial_{\nu}\left(f\right)g\right).\label{liuee3}
\end{eqnarray*}
\begin{lemma}(cf. \cite{k2})\label{l3}
The  $\mathbb{Z}$-graded Lie superalgebra $X(m,n)$ is transitive,
that is, if $x \in \mathfrak{g}_{i}$ with $i\geq0$ and
$[x,\mathfrak{g}_{[-1]}]=0$, then $x=0$.
\end{lemma}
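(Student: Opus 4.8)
The plan rests on the inclusion $X(m,n)\subseteq W(m,n)$ together with a single base computation for $W(m,n)$ itself, which I would carry out first. Suppose $x=\sum_{r}f_{r}\partial_{r}\in W(m,n)_{\underline{\gamma},[i]}$ with $i\geq 0$ and $[x,\partial_{j}]=0$ for all $j\in\overline{1,m+n}$. Since $[\partial_{j},f_{r}\partial_{r}]=\partial_{j}(f_{r})\partial_{r}$, every first derivative $\partial_{j}(f_{r})$ vanishes, so each $f_{r}$ is a scalar. On the other hand $x\in W(m,n)_{\underline{\gamma},[i]}$ forces $f_{r}$ to be $\underline{\gamma}$-homogeneous of degree $i+\gamma_{r}>0$, where $\gamma_{r}\geq 1$ is the $r$-th component of $\underline{\gamma}$. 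A nonzero scalar has degree $0$, so $f_{r}=0$ for every $r$, i.e. $x=0$. Thus the whole problem reduces to producing enough relations of the form $[x,\partial_{j}]=0$.

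I would next dispose of the types $W$, $S$, $H$, $HO$ and $SHO$, for which $\underline{\gamma}=\underline{1}$, so that the $\mathbb{Z}$-grading of $X(m,n)$ is the restriction of the standard grading of $W(m,n)$. A direct check shows that here every degree $-1$ generator is a scalar multiple of some $\partial_{j}$: indeed $\mathrm{D_H}(x_{i})$ and $\mathrm{D_{HO}}(x_{i})$ equal, up to sign, $\partial_{i_{H}^{\prime}}$ and $\partial_{i'_{HO}}$, while $S(m,n)_{[-1]}$ is spanned by the $\partial_{j}$ outright. As $i$ runs over its range the indices $i'$ exhaust $\overline{1,m+n}$, so in each of these cases $\mathfrak{g}_{[-1]}=\mathrm{span}_{\mathbb{F}}\{\partial_{j}\}=W(m,n)_{[-1]}$. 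Hence $[x,\mathfrak{g}_{[-1]}]=0$ gives $[x,\partial_{j}]=0$ for all $j$, and the base computation finishes the argument; in particular $S$, $HO$ and $SHO$ inherit transitivity from $W$.

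The substantive case is that of the contact types $K$ and $KO$, where $\underline{\gamma}$ assigns the distinguished variable $x_{\nu}$ degree $2$ and the grading no longer matches $W(m,n)$. Here I would write $x=\mathrm{D_X}(f)$ with $f$ $\underline{\gamma}$-homogeneous of degree $i+2\geq 2$, take $g=x_{j}$ with $j\neq\nu$ in the bracket formula for $[\mathrm{D_X}(f),\mathrm{D_X}(g)]$ displayed above, and use the injectivity of $\mathrm{D_X}$ to turn each relation $[\mathrm{D_X}(f),\mathrm{D_X}(x_{j})]=0$ into a first-order equation of the shape $\partial_{j}(f)=c_{j}\,x_{j'}\,\partial_{\nu}(f)$. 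Multiplying by $x_{j}$, summing, and applying Euler's identity $\sum_{r}\gamma_{r}x_{r}\partial_{r}(f)=(\deg f)f$ yields a relation $(\deg f)\,f=\omega\,\partial_{\nu}(f)$ with $\omega$ of degree $2$; differentiating by $\partial_{\nu}$ shows that $\partial_{\nu}(f)$ satisfies the same system one degree lower, so an induction on $\deg f$, with the degrees $0$ and $1$ settled by hand, forces $\partial_{\nu}(f)=0$ and then $f=0$, that is $x=0$. Finally $SKO$ inherits transitivity from $KO$: since $\mathrm{div}_{\lambda}(x_{j})=0$ for $j\neq\nu$, one has $SKO(m,m+1;\lambda)_{[-1]}=KO(m,m+1)_{[-1]}$, so any $x\in SKO(m,m+1;\lambda)_{i}\subseteq KO(m,m+1)_{i}$ annihilating $SKO(m,m+1;\lambda)_{[-1]}$ already annihilates $KO(m,m+1)_{[-1]}$ and hence vanishes.

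The main obstacle will lie in the contact case, and specifically in the bookkeeping of the Koszul signs coming from the odd (Grassmann) indeterminates: the constants $c_{j}$ and the quadratic $\omega$ carry parity-dependent signs, and it is exactly these signs that render the system inconsistent in positive degree, whereas a sign-blind computation spuriously produces an apparent nonzero solution already in degree $2$. Establishing the injectivity of $\mathrm{D_X}$ on the relevant space, and verifying that the cross-derivative compatibility of the equations $\partial_{j}(f)=c_{j}\,x_{j'}\,\partial_{\nu}(f)$ leaves no room for a nonzero $f$, is where the genuine work is concentrated.
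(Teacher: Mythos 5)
Your proposal is correct in outline, but it is genuinely a different route from the paper for a simple reason: the paper offers no proof of this lemma at all --- it is quoted as a known structural fact with the citation ``(cf.\ \cite{k2})'' to Kac's classification of infinite dimensional simple linearly compact Lie superalgebras, where transitivity is part of the standard setup. What you supply instead is a self-contained verification, and its skeleton is sound: the base computation $[x,\partial_j]=0\Rightarrow\partial_j(f_r)=0\Rightarrow f_r\in\mathbb{F}\Rightarrow f_r=0$ (by positivity of degree) is right; for $X=S,H,HO,SHO$ the identification $X(m,n)_{[-1]}=\mathrm{span}_{\mathbb{F}}\{\partial_j\}=W(m,n)_{[-1]}$, together with the fact that these types carry the restriction of the standard grading $\underline{\gamma}=\underline{1}$, does reduce everything to $W$; and the inheritance $SKO\subseteq KO$ via $SKO(m,m+1;\lambda)_{[-1]}=KO(m,m+1)_{[-1]}$ is legitimate (the only point to watch is that $\mathrm{D_{KO}}(x_j)$ lies in the \emph{derived} subalgebra defining $SKO$, which the paper's own list of local components supplies). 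Your contact-case plan also closes exactly as you anticipate, and in fact more cleanly than your hedging suggests: from $[\mathrm{D_X}(f),\mathrm{D_X}(x_j)]=0$ and injectivity of $\mathrm{D_X}$ one gets $\partial_{j'_X}(f)=c_j x_j\partial_{\nu}(f)$ with signs $c_j$ depending on parity; multiplying by $x_{j'_X}$ and summing, the quadratic $\omega=\sum_j c_j x_{j'_X}x_j$ vanishes identically precisely because of those signs (the even--even pairs cancel by commutativity against $\tau=\pm1$ for $K$, and the even--odd pairs cancel for $KO$), so Euler's identity gives $(\deg f)f=2x_{\nu}\partial_{\nu}(f)$, whose homogeneous solutions are scalar multiples of $x_{\nu}^{k}$ (resp.\ of $x_{2m+1}$ in the $KO$ case); these are then killed by returning to the individual, unsummed relations, with no induction actually needed. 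So the ``main obstacle'' you flag is real work but contains no surprise. The trade-off between the two routes is the usual one: the paper's citation is economical and leans on established structure theory, while your argument makes the lemma verifiable from the definitions given in Section 1, at the cost of type-by-type bracket computations.
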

\begin{lemma}\label{ll3}
 For $\mathbb{Z}$-graded Lie superalgebra $X(m,n)$, we have
 $$\mathrm{Ker}({\mathrm{ad}\partial_{i}})\cap X(m,n)_{[0]}=\mathrm{span}_{\mathbb{F}}\{x_{j}\partial_{k}\mid j, k\in \overline{1,m+n}, i\neq j\}\cap X(m,n)_{[0]}$$ and
 $$[\mathrm{Ker}({\mathrm{ad}\partial_{i}})\cap X(m,n)_{[0]}, \mathrm{Ker}({\mathrm{ad}\partial_{i}})\cap X(m,n)_{[0]}]=\mathrm{Ker}({\mathrm{ad}\partial_{i}})\cap X(m,n)_{[0]},$$
 where $i\in \overline{1,m+n}\backslash \nu $.
\end{lemma}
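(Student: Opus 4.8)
The plan is to treat the two assertions separately: the first by a direct computation of $\mathrm{ad}\,\partial_i$ on the standard basis of $W(m,n)_{[0]}$, and the second by exhibiting every basis vector of the intersection as a bracket of two such vectors.

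For the first identity I would start from the super-Leibniz rule and the fact that $\partial_i$ super-commutes with $\partial_k$ to obtain, for all $j,k\in\overline{1,m+n}$,
$$[\partial_i,x_j\partial_k]=\partial_i(x_j)\partial_k=\delta_{ij}\partial_k .$$
Since $\{x_j\partial_k\mid j,k\in\overline{1,m+n}\}$ is a basis of $W(m,n)_{[0]}$ and the $\partial_k$ are linearly independent, an element $\sum_{j,k}c_{jk}x_j\partial_k$ lies in $\mathrm{Ker}(\mathrm{ad}\,\partial_i)$ if and only if $c_{ik}=0$ for every $k$, i.e.\ if and only if it involves no term $x_i\partial_k$. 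Hence $\mathrm{Ker}(\mathrm{ad}\,\partial_i)\cap W(m,n)_{[0]}=\mathrm{span}_{\mathbb{F}}\{x_j\partial_k\mid j\neq i\}$, and intersecting with the subspace $X(m,n)_{[0]}\subseteq W(m,n)_{[0]}$ yields the claim for every $X$. Here the hypothesis $i\neq\nu$ is precisely what guarantees that $\partial_i$ is a genuine element of degree $-1$: for the contact types $K,KO,SKO$ the distinguished variable $x_\nu$ carries degree $2$, so $\partial_\nu$ would sit in degree $-2$ and the gradation bookkeeping would be different.

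For the second identity the inclusion $\subseteq$ is immediate. The space $X(m,n)_{[0]}$ is a subalgebra, and by the Jacobi identity in $W(m,n)$ the conditions $[\partial_i,a]=[\partial_i,b]=0$ force $[\partial_i,[a,b]]=0$; thus $\mathfrak{h}_i:=\mathrm{Ker}(\mathrm{ad}\,\partial_i)\cap X(m,n)_{[0]}$ is a subalgebra and $[\mathfrak{h}_i,\mathfrak{h}_i]\subseteq\mathfrak{h}_i$. For the reverse inclusion I would invoke the explicit bases of $X(m,n)_{[0]}$ recalled above and write each basis vector of $\mathfrak{h}_i$ as a bracket of two elements of $\mathfrak{h}_i$. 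The off-diagonal (``root'') vectors are the easy case: a vector supported on indices $j\neq k$ with $j\neq i$ factors as $[\,x_j\partial_l,\,x_l\partial_k\,]=x_j\partial_k$ for an auxiliary index $l\notin\{i,j,k\}$, and such an $l$ exists because $m,n>3$ leave ample room; the same mechanism applies through the structure constants of the operators $\mathrm{D_X}$ appearing in the bases of $H,K,HO,KO,SHO,SKO$.

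The real obstacle is the Cartan (diagonal) part. From the super-Leibniz rule one computes
$$[x_j\partial_k,x_k\partial_j]=x_j\partial_j-(-1)^{|x_j|+|x_k|}x_k\partial_k ,$$
so the relative parity of $j$ and $k$ decides the sign: one gets the difference $x_j\partial_j-x_k\partial_k$ when $j,k$ have equal parity and the sum $x_j\partial_j+x_k\partial_k$ when they have opposite parity. The delicate point is to verify that, as $j,k$ range over all indices distinct from $i$, these sign-twisted combinations span \emph{exactly} the Cartan part of $\mathfrak{h}_i$. This is where the defining constraint of the special algebras enters: for $S,H,K,HO,KO,SHO,SKO$ the degree-zero Cartan is cut out by the divergence (supertrace) condition, and it is precisely this hyperplane that the combinations above span, provided there are at least two even and two odd indices different from $i$ — which $m,n>3$ guarantees. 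I expect this rank statement, checked uniformly across the families $\{H,K,HO,KO\}$ and $\{SHO,SKO\}$ with their $\mathrm{D_X}$-bases and parity-dependent signs, to be the most case-sensitive step. (The mechanism also explains the role of the special constraint: in an unconstrained type the single ``identity-like'' Cartan direction, of nonzero supertrace, would fall outside the span of these brackets, so the divergence condition is exactly what restores perfectness.)
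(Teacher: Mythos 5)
Your treatment of the first equality is fine: $[\partial_i,x_j\partial_k]=\delta_{ij}\partial_k$ is the right computation, and intersecting the resulting description of $\mathrm{Ker}(\mathrm{ad}\,\partial_i)$ with $X(m,n)_{[0]}$ settles that part (for the contact types $K,KO,SKO$ you should also check that $X(m,n)_{[0]}$, formed with respect to $\underline{\gamma}$, still consists of linear vector fields, since $W(m,n)_{\underline{\gamma},[0]}$ also contains fields of the form $x_jx_k\partial_{\nu}$; this is routine). For the record, the paper offers no argument of its own for this lemma --- it is dismissed with ``the verifications are straightforward'' --- so the only question is whether your sketch can be completed.

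It cannot, and the failure sits exactly at the step you defer (``I expect this rank statement \ldots to be the most case-sensitive step''). That step is not merely delicate: it is impossible, because the second equality is \emph{false} for $X=W$, which is inside the lemma's scope. For any $j\neq i$ the element $x_j\partial_j$ lies in $\mathrm{Ker}(\mathrm{ad}\,\partial_i)\cap W(m,n)_{[0]}$; but $W(m,n)_{[0]}\cong \mathfrak{gl}(m|n)$ and the supertrace kills every supercommutator, while $\mathrm{str}(x_j\partial_j)=(-1)^{|x_j|}\neq 0$. Hence $x_j\partial_j$ is not a sum of brackets of kernel elements, and the derived subalgebra of the kernel has codimension one in it. Your own closing parenthetical describes precisely this obstruction (``in an unconstrained type the single identity-like Cartan direction, of nonzero supertrace, would fall outside the span of these brackets''), yet you never confront the fact that $W$ \emph{is} such an unconstrained type covered by the statement: your proposal simultaneously promises a proof and contains a disproof. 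The contact types are also defective: for $X=K$ and $i\leq 2r$ the kernel contains $\mathrm{D_K}(x_m)+\mathrm{D_K}(x_ix_{i'_{K}})$ (up to a sign governed by $\tau$), which has a nonzero component along $\mathrm{D_K}(x_m)$; since $\mathrm{D_K}(x_m)$ is central in $K(m,n)_{[0]}$, no sum of brackets can reach it. What is true --- and is all that Propositions \ref{p1} and \ref{p2} actually use --- is the off-diagonal part of the claim: every $x_j\partial_k$ with $j\neq i$, $k\neq j$ (respectively the elements $\mathrm{D_X}(x_kx_l)$ of Lemma \ref{yuanl1}) lies in the derived subalgebra of the kernel, and your factorization $x_j\partial_k=[x_j\partial_l,x_l\partial_k]$ proves exactly that. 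So the correct resolution is not to push a uniform rank verification through all eight families, but to weaken the lemma to this off-diagonal statement, or to restrict the second equality to the types $S,H,HO,SHO$, whose degree-zero components have no central or trace direction.
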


\begin{lemma}\label{yuanl1}
For $i, j, k, l\in \overline{1,m+n}$, we have that
$$x_{k}\partial_{l}\in [\mathrm{Ker}(\mathrm{ad}x_{i}\partial_{j})\cap X(m,n)_{[0]},\mathrm{Ker}(\mathrm{ad}x_{i}\partial_{j})\cap X(m,n)_{[0]}],$$
 where $k\neq j, l;$
$$\mathrm{D_{X}}(x_{k}x_{l})\in [\mathrm{Ker}(\mathrm{ad}\mathrm{D_{X}}(x_{i}x_{j}))\cap X(m,n)_{[0]},\mathrm{Ker}(\mathrm{ad}\mathrm{D_{X}}(x_{i}x_{j}))\cap X(m,n)_{[0]}],$$
where $k\neq l\in\overline{1,m+n}\backslash \nu,$ $k, l\neq i'_{X}, j'_{X}$.
\end{lemma}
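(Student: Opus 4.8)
The plan is to prove each membership by exhibiting the target as a single bracket $[A,B]$ of two elements $A,B$ that both centralize the prescribed degree-zero element; such a bracket manifestly lies in $[\mathrm{Ker}\cap X(m,n)_{[0]},\mathrm{Ker}\cap X(m,n)_{[0]}]$. Since $X(m,n)_{[0]}$ is spanned by the ``matrix units'' $x_i\partial_j$ (for $X=W,S$) or by the quadratics $\mathrm{D_X}(x_ix_j)$ (for the remaining $X$), the whole argument reduces to the explicit bracket relations recorded in this section, together with the degree-zero descriptions listed above. Throughout, the hypotheses $m,n>3$ guarantee an abundance of indices, so an auxiliary ``intermediate'' index subject to finitely many prescribed inequalities can always be chosen.

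For the first assertion ($X=W,S$) I would first compute, from the vector-field bracket, that $[x_i\partial_j,x_p\partial_q]=\delta_{jp}\,x_i\partial_q-(-1)^{|x_i\partial_j|\,|x_p\partial_q|}\delta_{iq}\,x_p\partial_j$, whence a basis element $x_p\partial_q$ centralizes $x_i\partial_j$ exactly when $p\neq j$ and $q\neq i$. In particular $x_k\partial_l$ itself lies in the centralizer only when $k\neq j$ and $l\neq i$, so these are the standing conditions (the stated $k\neq l$ merely records that we treat a genuine off-diagonal element). Choosing an intermediate index $p\notin\{i,j,k,l\}$ I would then use $[x_k\partial_p,x_p\partial_l]=x_k\partial_l$, where the would-be diagonal correction vanishes because $k\neq l$; both factors centralize $x_i\partial_j$ since $k\neq j$, $l\neq i$ and $p\neq i,j$, and for $X=S$ they are divergence-free because they are off-diagonal, hence lie in $S(m,n)_{[0]}$.

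For the second assertion I would pass to the Poisson/contact picture furnished by the bracket identity $[\mathrm{D_X}(f),\mathrm{D_X}(g)]=\mathrm{D_X}(\mathrm{D_X}(f)(g)-\cdots)$ stated just before Lemma \ref{l3}: on quadratics the bracket is governed by the duality $a\leftrightarrow a'_{X}$ built into $\mathrm{D_X}$, so $\mathrm{D_X}(x_ax_b)$ and $\mathrm{D_X}(x_cx_d)$ bracket nontrivially only when some index of one pairs with the dual of an index of the other. Consequently $\mathrm{D_X}(x_kx_l)$ centralizes $\mathrm{D_X}(x_ix_j)$ precisely when $k,l\neq i'_{X},j'_{X}$, which is exactly the hypothesis. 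I would then select an intermediate index $p\notin\{i,j,i'_{X},j'_{X},k,l,k'_{X},l'_{X},\nu\}$ and compute $[\mathrm{D_X}(x_kx_p),\mathrm{D_X}(x_{p'_{X}}x_l)]=c\,\mathrm{D_X}(x_kx_l)$ with $c\neq 0$, the point of the generic choice of $p$ being that the only surviving pairing is $p\leftrightarrow p'_{X}$. Both factors centralize $\mathrm{D_X}(x_ix_j)$ by the criterion just established, and for $X=SHO,SKO$ they satisfy the relevant divergence constraint because they are off-diagonal quadratics, so they indeed lie in $X(m,n)_{[0]}$.

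I expect the main obstacle to be the sign and correction-term bookkeeping in the contact cases $K,KO,SKO$, where the bracket carries the extra $\partial_{\nu}$ term and the indices split by parity; one must verify that, after choosing $p$ generically, all the unwanted cross-pairings and correction contributions cancel and the bracket collapses to a nonzero multiple of $\mathrm{D_X}(x_kx_l)$. A secondary point requiring care is checking that every auxiliary element produced actually satisfies the defining constraints of the special algebras $S,SHO,SKO$ and avoids the distinguished index $\nu$, which is where the restrictions $k,l\in\overline{1,m+n}\setminus\nu$ and the abundance of indices coming from $m,n>3$ are used.
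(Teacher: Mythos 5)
The paper itself offers no proof of this lemma --- it is among the statements introduced with ``The verifications are straightforward'' --- so the only question is whether your verification is right. Your strategy (realize the target as a single bracket of two elements that manifestly centralize the given degree-zero element, using a generic auxiliary index) is the natural one, and your treatment of the first assertion is correct; in particular you rightly note that $l\neq i$ must be added to the hypotheses, since the right-hand side is contained in $\mathrm{Ker}(\mathrm{ad}\,x_i\partial_j)$ while $[x_i\partial_j,x_k\partial_i]\neq 0$ for $k\neq j$, so the statement as printed (allowing $l=i$) is false. Also, the reduction of the contact cases $K$, $KO$, $SKO$ to the Hamiltonian computation, which you only flag as an expected obstacle, does go through: since none of $k,l,p,p'_{X}$ equals $\nu$, the terms involving $\partial_\nu(f)$ and the Euler-type term $2f-\sum_i x_i\partial_i(f)$ vanish on all quadratics involved.

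The genuine gap is in the second assertion, in the case $l=k'_{X}$, which the stated hypotheses allow ($k\neq l$ and $k,l\neq i'_{X},j'_{X}$ do not exclude it). There your key claim --- that after a generic choice of $p$ ``the only surviving pairing is $p\leftrightarrow p'_{X}$'' --- fails: the index $k$ of the first factor pairs with $l=k'_{X}$ in the second, and one gets $[\mathrm{D_{X}}(x_kx_p),\mathrm{D_{X}}(x_{p'_{X}}x_{k'_{X}})]=\mathrm{D_{X}}(\pm x_kx_{k'_{X}}\pm x_px_{p'_{X}})$, not a scalar multiple of the target. For $X=H,K$ this is repairable: choose $p$ to be an odd index, so that $p'_{X}=p$ and the parasitic term $x_px_{p'_{X}}=x_p^2=0$, or eliminate it by combining the three brackets built from the pairs $\{k,p\}$, $\{k,q\}$, $\{p,q\}$. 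But for $X=HO,KO$ no repair is possible, because the statement itself is false in this case: the linear functional on $X(m,n)_{[0]}$ sending each $\mathrm{D_{X}}(x_ax_{a'_{X}})$ to $1$ and killing the off-diagonal $\mathrm{D_{X}}(x_ax_b)$ (and, for $KO$, the central element $\mathrm{D_{KO}}(x_{2m+1})$) vanishes on every bracket --- it is the trace of the periplectic-type algebra $X(m,n)_{[0]}$ --- so $\mathrm{D_{X}}(x_kx_{k'_{X}})$ does not lie in $[X(m,n)_{[0]},X(m,n)_{[0]}]$ at all, let alone in the derived subalgebra of a centralizer. The honest conclusion is that the lemma needs the extra hypothesis $l\neq k'_{X}$ (equivalently $k\neq l'_{X}$), which is all that the paper's later applications actually use; your proof is complete exactly under that hypothesis, but as written it silently asserts a case in which both your computation and the statement break down.
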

 The next  proposition  is essential for the main  result in this paper.
 \begin{proposition}\label{p1}
If $(X(m,n),\sigma)$ is a multiplicative Hom-Lie superalgebra structure and $\sigma\neq 0$, then
$$\sigma\mid_{X(m,n)_{[-1]}}= \mathrm{id}\mid_{X(m,n)_{[-1]}}.$$
\end{proposition}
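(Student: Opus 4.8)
The plan is to pin down $\sigma$ on the bottom layer $X(m,n)_{[-1]}$ by using the Hom-Jacobi identity (\ref{e3}) as an annihilation device and then normalising the resulting scalars with the multiplicativity (\ref{e1}). Write $\mathfrak{g}=X(m,n)$ and let $d$ run over the standard basis of $\mathfrak{g}_{[-1]}$ (the $\partial_i$ for $X=W,S$ and the $\mathrm{D_X}(x_i)$ otherwise). The first observation is that if $y,z\in\mathrm{Ker}(\mathrm{ad}\,d)\cap\mathfrak{g}_{[0]}$, then $[d,y]=[d,z]=0$, so two of the three summands in (\ref{e3}) applied to the triple $(d,y,z)$ vanish and the identity collapses to $[\sigma(d),[y,z]]=0$. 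By Lemma~\ref{ll3} the brackets $[y,z]$ already exhaust $\mathrm{Ker}(\mathrm{ad}\,d)\cap\mathfrak{g}_{[0]}$, so in fact $\sigma(d)$ centralises the whole subalgebra $\mathrm{Ker}(\mathrm{ad}\,d)\cap\mathfrak{g}_{[0]}$.

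Next I would convert this centraliser condition into scalarity. Since $\mathrm{ad}\,\mathfrak{g}_{[0]}$ preserves the grading, one analyses the centraliser of $\mathrm{Ker}(\mathrm{ad}\,d)\cap\mathfrak{g}_{[0]}$ degree by degree; the description of this kernel shows it meets $\mathfrak{g}_{[-1]}$ in exactly $\mathbb{F}d$, while its intersection with $\bigoplus_{j\ge 0}\mathfrak{g}_{[j]}$ is trivial, the vanishing of the nonnegative part being exactly where transitivity (Lemma~\ref{l3}) enters. Hence $\sigma(d)=c_d\,d$ for a scalar $c_d$, which is nonzero because $\sigma$ is a monomorphism. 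Running the same argument with $d$ replaced by a degree-zero root vector $x_i\partial_j$ and invoking Lemma~\ref{yuanl1} in place of Lemma~\ref{ll3} gives $\sigma(x_i\partial_j)=\lambda_{ij}\,x_i\partial_j$, and feeding $[x_i\partial_j,\partial_i]=-\partial_j$ into (\ref{e1}) yields $\lambda_{ij}=c_j/c_i$; the analogous statements hold for the $\mathrm{D_X}$-description of $\mathfrak{g}_{[0]}$ in the remaining families.

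It remains to see that all $c_d$ coincide and equal $1$. Choosing $g\in\mathfrak{g}_{[1]}$ with $T:=[\partial_i,[\partial_j,g]]\neq 0$ and applying (\ref{e3}) to $(\partial_i,\partial_j,g)$ — where $[\partial_i,\partial_j]=0$ deletes the third summand — gives $(c_i-c_j)T=0$, so every $c_d$ equals a common $c$; by the previous paragraph $\lambda_{ij}=1$ and therefore $\sigma|_{\mathfrak{g}_{[0]}}=\mathrm{id}$. Finally I apply (\ref{e3}) to the triple $(\partial_i,\,x_k\partial_i,\,x_i^2\partial_j)$ with $i,j,k$ distinct (illustrated here for $W(m,n)$; the other families use the analogous degree $(-1,0,1)$ triples). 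Here $[\partial_i,x_k\partial_i]=0$ kills the summand carrying the still-unknown value of $\sigma$ on $\mathfrak{g}_{[1]}$, while $\sigma$ is already known on the other two entries, so the identity reduces to $2(c-1)\,x_k\partial_j=0$, forcing $c=1$ and hence $\sigma|_{\mathfrak{g}_{[-1]}}=\mathrm{id}$. For the contact families $K,KO,SKO$ there is the extra bottom layer $\mathfrak{g}_{[-2]}=\mathbb{F}\,\mathrm{D_X}(1)$ with $[\mathfrak{g}_{[-1]},\mathfrak{g}_{[-1]}]=\mathfrak{g}_{[-2]}$, and (\ref{e1}) gives $\sigma(\mathrm{D_X}(1))=c^2\,\mathrm{D_X}(1)$, which provides an independent check on the normalisation.

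The main obstacle is the centraliser computation underlying the scalarity claim: one must verify, uniformly across the eight families $W,S,H,K,HO,KO,SHO,SKO$, that nothing outside $\mathbb{F}d$ commutes with $\mathrm{Ker}(\mathrm{ad}\,d)\cap\mathfrak{g}_{[0]}$. This is where the parity signs of the super bracket and the type-dependent formulas for $\mathrm{D_X}$ intervene, and for the contact families the extra $\partial_\nu$-term in the bracket formula together with the depth-two layer $\mathfrak{g}_{[-2]}$ demands separate bookkeeping. These individual verifications are the straightforward-but-lengthy calculations that Lemmas~\ref{l3},~\ref{ll3} and~\ref{yuanl1} are designed to package, so that the Hom-Jacobi and multiplicativity steps above can be applied cleanly.
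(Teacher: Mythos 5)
Your first step is sound: applying Eq.~(\ref{e3}) to a triple $(d,y,z)$ with $y,z\in\mathrm{Ker}(\mathrm{ad}\,d)\cap X(m,n)_{[0]}$ does give $[\sigma(d),[y,z]]=0$, and the second part of Lemma~\ref{ll3} upgrades this to $[\sigma(d),K_d]=0$ where $K_d:=\mathrm{Ker}(\mathrm{ad}\,d)\cap X(m,n)_{[0]}$. The gap is the next sentence, where you claim that the centraliser of $K_d$ meets $\bigoplus_{j\ge 0}X(m,n)_{[j]}$ trivially ``by transitivity'' and conclude $\sigma(d)=c_d\,d$. That claim is false for several of the families: for $X=W$ the Euler element $E=\sum_{l=1}^{m+n}x_l\partial_l$ lies in $W(m,n)_{[0]}$, is nonzero, and satisfies $[E,u]=(\deg u)\,u$ for graded $u$, hence $[E,W(m,n)_{[0]}]=0$; so $E$ centralises $K_d$ but $E\notin\mathbb{F}d$. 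The same happens for $K$ and $KO$ with the contact grading element ($\mathrm{D_K}(x_m)$, resp.\ $\mathrm{D_{KO}}(x_{2m+1})$, which appear among the degree-zero generators listed in Section~1). Thus $[\sigma(d),K_d]=0$ does not exclude, e.g., $\sigma(\partial_i)=\partial_i+E$, and everything downstream — the scalars $c_d$ and $\lambda_{ij}$, the equalisation $(c_i-c_j)T=0$, the normalisation $c=1$ — is built on this unsupported scalarity. Moreover, transitivity (Lemma~\ref{l3}) cannot close the hole even in the remaining families: it kills a nonnegative-degree element only when that element commutes with all of $X(m,n)_{[-1]}$, whereas your element is only known to commute with a subalgebra of $X(m,n)_{[0]}$. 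The same objection applies to your parallel treatment of $x_i\partial_j$ via Lemma~\ref{yuanl1}.

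The missing ingredient — and precisely what the paper does first — is to manufacture triples whose \emph{inner} bracket lands in degree $-1$ rather than degree $0$. For $X=W,S$ the paper applies Eq.~(\ref{e3}) to $(\partial_i,\partial_j,x_j\partial_k)$ with $j\neq i,k$: the second and third summands vanish, leaving $[\sigma(\partial_i),\partial_k]=0$ for all $k$. This says $\sigma(\partial_i)$ commutes with all of $X(m,n)_{[-1]}$, which is exactly the hypothesis of Lemma~\ref{l3}; transitivity then legitimately annihilates every component of $\sigma(\partial_i)$ of degree $\ge 0$ (in particular any Euler component), so $\sigma(\partial_i)\in X(m,n)_{[-1]}$. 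Only after this does one use the degree-zero kernels (your paragraph-one computation) to diagonalise \emph{within} $X(m,n)_{[-1]}$, where the centraliser of $K_d$ really is $\mathbb{F}d$; a multiplicativity argument (the paper's left-inverse trick, or your $(c_i-c_j)T=0$ and $(\partial_i,x_k\partial_i,x_i^2\partial_j)$ computations, which are fine once scalarity is in hand) then pins the scalar to $1$, with the analogous steps in the $\mathrm{D_X}$ families. So your outline is repairable, but as written the scalarity step is a genuine gap, not a routine verification to be deferred to case-by-case bookkeeping.
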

\begin{proof}
\textbf{Case 1}: $X:=W$ or $S$.  By Eq. (\ref{e3}), we have
\begin{eqnarray*}
0&=&(-1)^{|\partial_{i}||x_{j}\partial_{k}|}[\sigma(\partial_{i}),[\partial_{j},x_{j}\partial_{k}]]
+
(-1)^{|\partial_{i}||\partial_{j}|}[\sigma(\partial_{j}),[x_{j}\partial_{k},\partial_{i}]]\\
&&+
(-1)^{|\partial_{j}||x_{j}\partial_{k}|}[\sigma(x_{j}\partial_{k}),[\partial_{i},\partial_{j}]]\\
&=&(-1)^{|\partial_{i}||x_{j}\partial_{k}|}[\sigma(\partial_{i}),\partial_{k}],
\end{eqnarray*}
where $j\neq i, k\in \overline{1,m+n}.$ By Lemma \ref{l3}, we have
$$\sigma(X(m,n)_{[-1]})=X(m,n)_{[-1]}.$$
Then for any $i\in \overline{1,m+n},$ one may suppose $\sigma(\partial_{i})=\sum\limits_{l=1}^{m+n}a_{l}\partial_{l},$ where $a_{l}\in \mathbb{F}.$  Lemma \ref{ll3} and Eq. (\ref{e3}) imply that
$\sigma(\partial_{i})=a_{i}\partial_{i}.$ For distinct $i, j, k\in \overline{1,m+n},$ put $x=x_{j}\partial_{j}-x_{k}\partial_{k},$ $y=\partial_{i}$ and $z=x_{i}\partial_{j}.$  Then Eq. (\ref{e3}) implies that
\begin{eqnarray}\label{e88}
[\sigma(x_{j}\partial_{j}-x_{k}\partial_{k}), \partial_{j}]+[\sigma(\partial_{i}),x_{i}\partial_{j}]=0.
\end{eqnarray}
Suppose $\sigma^{-1}$ is  an left linear inverse of $\sigma$ (vector space). Then
\begin{eqnarray*}
\sigma^{-1}([\sigma(x_{j}\partial_{j}-x_{k}\partial_{k}),\partial_{j}])=[x_{j}\partial_{j}-x_{k}\partial_{k},\sigma^{-1}(\partial_{j})]=
[x_{j}\partial_{j}-x_{k}\partial_{k},a_{j}^{-1}\partial_{j}]=-a_{j}^{-1}\partial_{j}.
\end{eqnarray*}
Hence
$$[\sigma(x_{j}\partial_{j}-x_{k}\partial_{k}),\partial_{j}]=-\partial_{j}.$$
By Eq. (\ref{e88}), we have $a_{i}=1,$ where $i\in \overline{1,m+n}.$ That is
\begin{eqnarray*}
(\sigma-\mathrm{id})\mid_{X(m,n)_{[-1]}}=0.
\end{eqnarray*}

 \noindent\textbf{Case 2}: $X:=H, K,  HO,  KO,  SHO$ or $SKO.$ For $i,
j, k\in \overline{1,m+n}\backslash \nu,$ Eq. (\ref{e3}) implies that
\begin{eqnarray*}
0&=&(-1)^{|\mathrm{D_{X}}(x_{i})||\mathrm{D_{X}}(x_{j'_{X}}x_{k'_{X}})|}[\sigma(\mathrm{D_{X}}(x_{i})),[\mathrm{D_{X}}(x_{j}), \mathrm{D_{X}}(x_{j'_{X}}x_{k'_{X}})]]\\
&&+
(-1)^{|\mathrm{D_{X}}(x_{j})||\mathrm{D_{X}}(x_{i})|}[\sigma(\mathrm{D_{X}}(x_{j})),[\mathrm{D_{X}}(x_{j'_{X}}x_{k'_{X}}),\mathrm{D_{X}}(x_{i})]]\\
&&+
(-1)^{|\mathrm{D_{X}}(x_{j'_{X}}x_{k'_{X}})||\mathrm{D_{X}}(x_{j})|}[\sigma(\mathrm{D_{X}}(x_{j'_{X}}x_{k'_{X}})),[\mathrm{D_{X}}(x_{i}),\mathrm{D_{X}}(x_{j})]].
\end{eqnarray*}
It follows that
\begin{eqnarray}\label{e15}
[\sigma(\mathrm{D_{X}}(x_{i})),\mathrm{D_{X}}(x_{k'_{X}})]=0,\;\;i\neq j, j'_{X}, k
\end{eqnarray}
 and
 \begin{eqnarray}\label{e16}
 [\sigma(\mathrm{D_{X}}(x_{i})),\mathrm{D_{X}}(x_{i'_{X}})]=[\sigma(\mathrm{D_{X}}(x_{j})),\mathrm{D_{X}}(x_{j'_{X}})], \;\; i\neq j,j'.
 \end{eqnarray}
 By Eq. (\ref{e15}), (\ref{e16}) and Lemma \ref{l3}, it is easy to obtain that
 $$\sigma(\mathrm{D_{X}}(x_{i}))=a_{i}\mathrm{D_{X}}(1)+ \sum_{\nu\neq l=1}^{m+n}a_{il}\mathrm{D_{X}}(x_{l})$$
for some $a_{i}, a_{il}\in \mathbb{F}.$\\

\noindent\textbf{Subcase 2.1}: $X:=H, HO,$ or $SHO.$  Lemma \ref{ll3} and Eq.
(\ref{e3}) imply that
 $a_{ik}=0$ for all $i\neq k\in \overline{1,m+n}\backslash \nu.$ Thus,
\begin{eqnarray*}
(\sigma-\mathrm{id})\mid_{X(m,n)_{[-1]}}=0.
\end{eqnarray*}

\noindent\textbf{Subcase 2.2}: $X:=K, KO$ or $SKO.$ Put $x=\mathrm{D_{X}}(x_{i}),$
$y=\mathrm{D_{X}}(x_{j})$ and
$$z=\mathrm{D_{X}}(x_{j'_{X}}x_{\nu}+\delta_{X,SKO}(-1)^{|x_{j'_{X}}|}(m\lambda-1)x_{k}x_{k'_{X}}x_{j'_{X}}).$$
Eq. (\ref{e3}) implies that
\begin{eqnarray*}\label{e888}
0&=&(-1)^{|\mathrm{D_{X}}(x_{i}))||z|}[\sigma(\mathrm{D_{X}}(x_{i})), \mathrm{D_{X}}(x_{\nu}+\delta_{X,SKO}(m\lambda-1)x_{k}x_{k'_{X}}+x_{j}x_{j'_{X}})]\\
&&+(-1)^{|\mathrm{D_{X}}(x_{i})||\mathrm{D_{X}}(x_{j})|}[\sigma(\mathrm{D_{X}}(x_{j})),\mathrm{D_{X}}(x_{i}x_{j'_{X}})].
\end{eqnarray*}
Hence $a_{i}=0.$ Take  $i, j, k\in \overline{1,m+n}\backslash \nu$ and $i\neq j, j'_{X}, k, k'_{X}.$ Put  $x=\mathrm{D_{X}}(x_{j}x_{k})\in X(m,n)_{\bar{0}},$ $y=\mathrm{D_{X}}(x_{i})$ and $z=\mathrm{D_{X}}(x_{i'_{X}}x_{j'_{X}}).$  By  Eq. (\ref{e3})  again, we have
\begin{eqnarray*}\label{e888}
0&=&(-1)^{|\partial_{i}||x_{i}|}[\sigma(\mathrm{D_{X}}(x_{j}x_{k})), \mathrm{D_{X}}(x_{j'_{X}})]\\
&&+(-1)^{|\partial_{j'_{X}}||x_{i'_{X}}x_{j'_{X}}|+|\partial_{j'_{X}}||x_{i'_{X}}|}
[\sigma(\mathrm{D_{X}}(x_{i})),\mathrm{D_{X}}(x_{i'_{X}}x_{k})].
\end{eqnarray*}
Suppose $\sigma^{-1}$ is  a left inverse of $\sigma.$ Then
\begin{eqnarray*}
\sigma^{-1}([\sigma(\mathrm{D_{X}}(x_{j}x_{k})), \mathrm{D_{X}}(x_{j'_{X}})])&=&[\mathrm{D_{X}}(x_{j}x_{k}), \sigma^{-1}(\mathrm{D_{X}}(x_{j'_{X}}))]\\
&=&
[\mathrm{D_{X}}(x_{j}x_{k}), a_{j'_{X}j'_{X}}^{-1}\mathrm{D_{X}}(x_{j'_{X}})]\\
&=&-(-1)^{|\partial_{j'_{X}}||x_{j'_{X}}|}a_{j'_{X}j'_{X}}^{-1}\mathrm{D_{X}}(x_{k}).
\end{eqnarray*}
Hence
\begin{eqnarray*}
-(-1)^{|\partial_{j'_{X}}||x_{j'_{X}}|}a_{kk}a_{j'_{X}j'_{X}}^{-1}\mathrm{D_{X}}(x_{k})&=&[\sigma(\mathrm{D_{X}}(x_{j}x_{k})), \mathrm{D_{X}}(x_{j'_{X}})]\\
&=&-(-1)^{|\partial_{j'_{X}}||x_{j'_{X}}|+|\partial_{i}||x_{i}|}[\sigma(\mathrm{D_{X}}(x_{i})),\mathrm{D_{X}}(x_{i'_{X}}x_{k})]\\
&=&-(-1)^{|\partial_{j'_{X}}||x_{j'_{X}}|}a_{ii}\mathrm{D_{X}}(x_{k}).
\end{eqnarray*}
The arbitrariness of $j$ and $k$ implies that $a_{ii}=1.$
Thus,
\begin{eqnarray*}
(\sigma-\mathrm{id})\mid_{X(m,n)_{[-1]}}=0.
\end{eqnarray*}
\end{proof}
\begin{proposition}\label{p2}
If $(X(m,n),\sigma)$ is a multiplicative Hom-Lie superalgebra structure and $\sigma\neq 0$, then
$$\sigma\mid_{X(m,n)_{[0]}}= \mathrm{id}\mid_{X(m,n)_{[0]}}.$$
\end{proposition}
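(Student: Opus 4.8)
The plan is to build on Proposition \ref{p1} and exploit the two structural identities \eqref{e1} (multiplicativity) and \eqref{e3} (Hom-Jacobi) together with transitivity (Lemma \ref{l3}). The guiding principle is that \eqref{e1} transports information from $X(m,n)_{[-1]}$, where $\sigma$ is already the identity, up to $X(m,n)_{[0]}$, pinning $\sigma$ down on $X(m,n)_{[0]}$ modulo terms of negative degree; the genuinely nonlinear identity \eqref{e3} is then needed to delete those lower-order terms.

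First I would show that, for each $w\in X(m,n)_{[0]}$, the nonnegative-degree part of $\sigma(w)$ is exactly $w$. Fix $u\in X(m,n)_{[-1]}$. Since $[u,w]\in X(m,n)_{[-1]}$ and $\sigma$ is the identity there by Proposition \ref{p1}, Eq. \eqref{e1} gives $[u,\sigma(w)]=\sigma[u,w]=[u,w]$, so $[u,\sigma(w)-w]=0$ for all such $u$. Writing $\sigma(w)-w=\sum_{i}d_{i}$ with $d_{i}\in X(m,n)_{[i]}$ and matching degrees, each $d_{i}$ lies in $\mathrm{Ker}(\mathrm{ad}\,X(m,n)_{[-1]})$; transitivity (Lemma \ref{l3}) kills every $d_{i}$ with $i\geq 0$ and forces the degree-$0$ part of $\sigma(w)$ to equal $w$. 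Hence $\sigma(w)=w+c_{-1}(w)+c_{-2}(w)$, where $c_{-1}(w)\in X(m,n)_{[-1]}$ and $c_{-2}(w)\in X(m,n)_{[-2]}$, the last summand occurring only for $X=K,KO,SKO$.

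The crux is to eliminate $c_{-1}$ and $c_{-2}$. For the contact families $K,KO,SKO$ the degree-$(-1)$ part dies at once: since $[u,c_{-2}(w)]\in X(m,n)_{[-3]}=0$, the relation $[u,\sigma(w)-w]=0$ collapses to $[u,c_{-1}(w)]=0$ for all $u\in X(m,n)_{[-1]}$, and nondegeneracy of the pairing $X(m,n)_{[-1]}\times X(m,n)_{[-1]}\to X(m,n)_{[-2]}=\mathbb{F}\cdot\mathrm{D_{X}}(1)$ forces $c_{-1}(w)=0$; moreover $\mathrm{D_{X}}(1)\in[X(m,n)_{[-1]},X(m,n)_{[-1]}]$ gives $\sigma(\mathrm{D_{X}}(1))=\mathrm{D_{X}}(1)$ via \eqref{e1}. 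For the remaining correction I would feed \eqref{e3} with all three arguments $x,y,z\in X(m,n)_{[0]}$: the ordinary Jacobi contribution cancels, leaving the constraint $\sum_{\mathrm{cyc}}(-1)^{|x||z|}[c(x),[y,z]]=0$ on the offending correction $c\in\{c_{-1},c_{-2}\}$, valued in a single negative layer. Specializing $x,y,z$ according to the explicit bracket relations of Lemmas \ref{ll3} and \ref{yuanl1}, so that the inner brackets $[y,z],[z,x],[x,y]$ are controlled basis elements of $X(m,n)_{[0]}$, turns this into a linear system that forces $c\equiv 0$. Combining with $[\,\sigma(w)-w,\,X(m,n)_{[-1]}]=0$ and transitivity once more yields $\sigma(w)=w$, i.e. $(\sigma-\mathrm{id})\mid_{X(m,n)_{[0]}}=0$.

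I expect the main obstacle to be precisely the removal of the negative-degree corrections $c_{-1},c_{-2}$. Multiplicativity \eqref{e1} alone does not suffice, since it only forces $c_{-1}$ to be a $1$-cocycle of $X(m,n)_{[0]}$ with values in $X(m,n)_{[-1]}$ and would still allow inner-automorphism-type corrections $w\mapsto w+[w,\xi]$ with $\xi\in X(m,n)_{[-1]}$; such maps fix $X(m,n)_{[-1]}$ pointwise (as $[X(m,n)_{[-1]},X(m,n)_{[-1]}]$ is trivial in the non-contact cases) yet move $X(m,n)_{[0]}$, and it is exactly the Hom-Jacobi identity \eqref{e3} that rules them out. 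The delicate bookkeeping will be the super-signs and the uniform handling of the eight families $W,S,H,K,HO,KO,SHO,SKO$ (contact versus non-contact), with Lemmas \ref{ll3} and \ref{yuanl1} supplying the explicit elements needed to close the argument.
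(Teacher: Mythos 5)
Your opening step coincides with the paper's: multiplicativity \eqref{e1} together with Proposition \ref{p1} gives $[u,\sigma(w)-w]=0$ for all $u\in X(m,n)_{[-1]}$, and transitivity (Lemma \ref{l3}) then confines $\sigma(w)-w$ to negative degrees; your nondegeneracy argument killing $c_{-1}$ for $K,KO,SKO$ is also a valid (and clean) shortcut that the paper does not use. For the non-contact families $W,S,H,HO,SHO$, where only $c_{-1}$ occurs, your step 3 is essentially the paper's Case 1 and Subcases 2.1--2.2, and it goes through. The genuine gap is in the contact families $K$, $KO$, $SKO$, at exactly the point you flag as the crux: the correction $c_{-2}(w)=a(w)\mathrm{D_{X}}(1)\in X(m,n)_{[-2]}$ cannot be eliminated by the Hom-Jacobi identity \eqref{e3} with all three arguments in $X(m,n)_{[0]}$. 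Indeed your constraint reads $\sum_{\mathrm{cyc}}\pm\, a(x)\,[\mathrm{D_{X}}(1),[y,z]]=0$, and every inner bracket $[y,z]$ with $y,z\in X(m,n)_{[0]}$ lies in the derived algebra $[X(m,n)_{[0]},X(m,n)_{[0]}]$, which is spanned by elements $\mathrm{D_{X}}(x_{i}x_{j})$ with $i,j\neq \nu$ and contains no multiple of the grading element $\mathrm{D_{X}}(x_{\nu}+\delta_{X,SKO}n\lambda x_{j}x_{j'_{X}})$. Since $[\mathrm{D_{X}}(1),\mathrm{D_{X}}(x_{i}x_{j})]=0$ (only the $x_{\nu}$-direction pairs nontrivially with $\mathrm{D_{X}}(1)$ inside $X(m,n)_{[0]}$), your ``linear system'' is identically $0=0$ and gives no information on $a$. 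The same vacuity holds for any triple drawn from $X(m,n)_{[-2]}\oplus X(m,n)_{[-1]}\oplus X(m,n)_{[0]}$, and even adding multiplicativity \eqref{e1} only kills $a$ on the derived part of $X(m,n)_{[0]}$, never on the grading element. So a correction such as $\sigma(\mathrm{D_{X}}(x_{\nu}+\delta_{X,SKO}n\lambda x_{j}x_{j'_{X}}))=\mathrm{D_{X}}(x_{\nu}+\cdots)+a\,\mathrm{D_{X}}(1)$ survives all the constraints you impose.

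This is precisely why the paper's Subcase 2.3 reaches into positive degree: it takes $x=\mathrm{D_{X}}(x_{i}x_{j})\in X(m,n)_{[0]}$, $y=\mathrm{D_{X}}(x_{k})\in X(m,n)_{[-1]}$ and $z=\mathrm{D_{X}}(x_{k'_{X}}x_{\nu}+\cdots)\in X(m,n)_{[1]}$, chosen so that $[x,y]=0$ --- hence the unknown quantity $\sigma(z)$ drops out of \eqref{e3} --- while $[y,z]$ acquires a nonzero $\mathrm{D_{X}}(x_{\nu})$-component, against which $\mathrm{D_{X}}(1)$ brackets nontrivially; this forces $a_{ij}=0$, and an analogous mixed triple handles the grading element itself. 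Any repair of your argument needs this (or an equivalent) use of degree-one elements arranged so that $\sigma$ of the positive-degree entry disappears from the identity; restricting \eqref{e3} to $X(m,n)_{[0]}$, even augmented by Lemmas \ref{ll3} and \ref{yuanl1}, cannot close the contact case.
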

\begin{proof}
 \textbf{Case 1}: $X:=W$ or $S.$ Put  $x\in X(m,n)_{[0]}.$ Then by
Proposition \ref{p1}, we have
\begin{eqnarray*}\label{e99}
[\sigma(x),\partial_{i}]=[\sigma(x),\sigma(\partial_{i})]=\sigma([x,\partial_{i}])=[x,\partial_{i}]
\end{eqnarray*}
for all $i\in \overline{1,m+n}.$ By  Lemma \ref{l3},
 we may write
$$\sigma(x_{i}\partial_{j})=x_{i}\partial_{j}+\sum_{s=1}^{m+n}a_{ijs}\partial_{s},$$
where   $a_{ijs}\in \mathbb{F}.$
By Lemma \ref{yuanl1} and Eq. (\ref{e3}), we have
$$a_{ijk}\partial_{l}=[\sum_{s=1}^{m+n}a_{ijs}\partial_{s}, x_{k}\partial_{l}]=0$$
for  $k, l\in \overline{1,m+n}$ and $k\neq j,l.$
  By the arbitrariness of $l$, we know  $a_{ijk}=0$ for all $j\neq k\in \overline{1,m+n}.$  Put $x=x_{i}\partial_{j},$ $y=x_{j}\partial_{l},$ $z=x_{l}\partial_{l}-(-1)^{(|x_{l}|+|x_{s}|)}x_{s}\partial_{s}.$ By Eq. (\ref{e3}) we have that
$$[\sigma(x_{i}\partial_{j}),x_{j}\partial_{l}]+[\sigma(x_{l}\partial_{l}-(-1)^{(|x_{l}|+|x_{s}|)}x_{s}\partial_{s}),[x_{i}\partial_{j},x_{j}\partial_{l}]]=0.$$
Furthermore,
$$[a_{ijj}\partial_{j},x_{j}\partial_{l}]+[a_{lll}\partial_{l}-a_{sss}\partial_{s},x_{i}\partial_{l}]=0.$$
Then $a_{ijj}=0.$ Summarizing,  we have $\sigma(x_{i}\partial_{j})=x_{i}\partial_{j}.$
\\

\noindent\textbf{Case 2}: $X:=H,  K,  HO,   KO,   SHO$ or $SKO.$ For
$x\in X(m,n)_{[0]},$      by Proposition \ref{p1}, we have
\begin{eqnarray}\label{e22}
[\sigma(x),\mathrm{D_{X}}(x_{i})]=[\sigma(x),\sigma(\mathrm{D_{X}}(x_{i}))]=\sigma([x,\mathrm{D_{X}}(x_{i})])=[x,\mathrm{D_{X}}(x_{i})]
\end{eqnarray}
where  $i\in \overline{1,m+n}\backslash \nu.$
By Lemma \ref{l3}, we may write
$$\sigma(\mathrm{D_{X}}(x_{i}x_{j}))=\mathrm{D_{X}}(x_{i}x_{j})+
\sum_{\nu\neq s=1}^{m+n}a_{ijs}\mathrm{D_{X}}(x_{s})+a_{ij}\mathrm{D_{X}}(1),$$
where $\mathrm{D_{X}}(x_{i}x_{j})\in X(m,n)_{[0]}$ and $a_{ij}, a_{ijs}\in \mathbb{F}.$
 By  Lemma \ref{yuanl1} and Eq. (\ref{e3}) we have that
$$\pm a_{ijk'_{X}}\mathrm{D_{X}}(x_{l})\pm a_{ijl'_{X}}\mathrm{D_{X}}(x_{k})=[\sum_{s=1}^{m+n}a_{ijs}\mathrm{D_{X}}(x_{s}), \mathrm{D_{X}}(x_{k}x_{l})]=0,$$
where $k\neq l\in \overline{1,m+n}\backslash \nu$ and $k, l\neq i'_{X}, j'_{X}.$
That is $a_{ijs}=0$ for all $i, j\neq s\in \overline{1,m+n}\backslash \nu.$ Then
$$\sigma(\mathrm{D_{X}}(x_{i}x_{j}))=\mathrm{D_{X}}(x_{i}x_{j})
+a_{iji}\mathrm{D_{X}}(x_{i})+a_{ijj}\mathrm{D_{X}}(x_{j})+a_{ij}\mathrm{D_{X}}(1).$$
If $k=i'_{X}$ or $k=j'_{X},$ by Eq. (\ref{e3}) we have
$$
\pm a_{ijk}\mathrm{D_{X}}(x_{l})=[a_{iji}\mathrm{D_{X}}(x_{i})+a_{ijj}\mathrm{D_{X}}(x_{j}),\mathrm{D_{X}}(x_{k}x_{l})]=0
$$
Hence
\begin{eqnarray}\label{e20}
\sigma(\mathrm{D_{X}}(x_{i}x_{j}))=\mathrm{D_{X}}(x_{i}x_{j})+a_{ij}\mathrm{D_{X}}(1).
\end{eqnarray}

\noindent\textbf{Subcase 2.1}: $X:=H$ or $HO.$ From Eq. (\ref{e20}), we have
$$(\sigma-\mathrm{id})\mid_{X(m,n)_{[0]}}=0.$$

\noindent\textbf{Subcase 2.2}: $X:=SHO.$ By Eq. (\ref{e20}) again, for $i\neq
j\in \overline{1,m}\backslash \nu$ we can obtain
\begin{eqnarray}\label{e21}
\sigma(\mathrm{D_{X}}(x_{i}x_{i'_{X}}-x_{j}x_{j'_{X}}))&=&\sigma([\mathrm{D_{X}}(x_{i}x_{j}), \mathrm{D_{X}}(x_{i'_{X}}x_{j'_{X}})])\\\nonumber
&=&
[\sigma(\mathrm{D_{X}}(x_{i}x_{j})), \sigma(\mathrm{D_{X}}(x_{i'_{X}}x_{j'_{X}}))]\\\nonumber
&=&[\mathrm{D_{X}}(x_{i}x_{j}), \mathrm{D_{X}}(x_{i'_{X}}x_{j'_{X}})]=\mathrm{D_{X}}(x_{i}x_{i'_{X}}-x_{j}x_{j'_{X}}).
\end{eqnarray}
From Eq. (\ref{e20}) and (\ref{e21}), we know
$$(\sigma-\mathrm{id})\mid_{SHO(m,n)_{[0]}}=0.$$

\noindent\textbf{Subcase 2.3}: $X:=K, KO$ or $SKO.$ Take
$x=\mathrm{D_{X}}(x_{i}x_{j}),$ $y=\mathrm{D_{X}}(x_{k})$ and
$z=\mathrm{D_{X}}(x_{k'_{X}}x_{\nu}$ $+(-1)^{|x_{k'_{X}}|}(n\lambda-1)x_{l}x_{l'_{X}}),$
where $i, j, k, k'_{X}, l, l'_{X}\in \overline{1,m+n}$ are distinct.
By Eq. (\ref{e3}), we have
\begin{eqnarray*}
-2a_{ij}=[\sigma(x),[y,z]]&=&(-1)^{|\partial_{k}||x_{k}|}[\mathrm{D_{X}}(x_{i}x_{j})+a_{ij}\mathrm{D_{X}}(1), \\
&&\mathrm{D_{X}}(x_{\nu}+(-1)^{|x_{k'_{X}}|}(n\lambda-1)x_{l}x_{l'_{X}}+
(-1)^{|x_{k'_{X}}|}x_{k}x_{k'_{X}})]=0.
\end{eqnarray*}
Hence
$$
\sigma(\mathrm{D_{X}}(x_{i}x_{j}))=\mathrm{D_{X}}(x_{i}x_{j}).
$$
By Eq. (\ref{e22}), we can write
\begin{eqnarray}\label{e23}
\sigma(\mathrm{D_{X}}(x_{\nu}+\delta_{X,SKO}n\lambda x_{j}x_{j'_{X}}))&=&\mathrm{D_{X}}(x_{\nu}+\delta_{X,SKO}n\lambda x_{j}x_{j'_{X}})\\\nonumber
&&+\sum_{\nu\neq s=1}^{m+n}a_{\nu js}\mathrm{D_{X}}(x_{s})+a_{\nu j}\mathrm{D_{X}}(1).
\end{eqnarray}
Using the method above, we can obtain easily
$$
\sigma(\mathrm{D_{X}}(x_{\nu}+\delta_{X,SKO}n\lambda x_{j}x_{j'_{X}}))=\mathrm{D_{X}}(x_{\nu}+\delta_{X,SKO}n\lambda x_{j}x_{j'_{X}}).
$$
By Eq. (\ref{e20}), (\ref{e21}) and (\ref{e23}), we have
$$(\sigma-\mathrm{id})\mid_{X(m,n)_{[0]}}=0.$$
The proof is
complete.
\end{proof}

\begin{theorem}
There is only the trivial multiplicative Hom-Lie superalgebra structure  on the infinite dimensional
simple Lie superalgebras of vector fields.
\end{theorem}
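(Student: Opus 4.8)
The plan is to reduce immediately to the case $\sigma\neq 0$, since $\sigma=0$ is trivial by definition. The multiplicativity axiom (\ref{e1}) together with the simplicity of $X(m,n)$ makes $\sigma$ an \emph{injective endomorphism} of the Lie superalgebra, so from now on I may treat $\sigma$ as a homomorphism. Propositions \ref{p1} and \ref{p2} then give $\sigma=\mathrm{id}$ on $X(m,n)_{[-1]}\oplus X(m,n)_{[0]}$. For $X=K,KO,SKO$ I would first push this to the bottom degree: since $X(m,n)_{[-2]}=\mathbb{F}\cdot\mathrm{D_X}(1)$ and this line arises as a nonzero bracket $[\mathrm{D_X}(x_i),\mathrm{D_X}(x_{i'_X})]$ of two degree $-1$ elements, applying $\sigma$ and using that it fixes $X(m,n)_{[-1]}$ yields $\sigma(\mathrm{D_X}(1))=\mathrm{D_X}(1)$. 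Thus $\sigma$ fixes $X(m,n)_{[j]}$ for every $j\leq 0$.

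The heart of the argument is an induction on $i\geq 1$ proving $\sigma|_{X(m,n)_{[i]}}=\mathrm{id}$, driven by transitivity (Lemma \ref{l3}). Assuming $\sigma$ fixes all strictly lower degrees, I take $x\in X(m,n)_{[i]}$ and any $\partial\in X(m,n)_{[-1]}$, and compute
\begin{eqnarray*}
[\sigma(x),\partial]=[\sigma(x),\sigma(\partial)]=\sigma([x,\partial])=[x,\partial],
\end{eqnarray*}
the last equality because $[x,\partial]\in X(m,n)_{[i-1]}$ is fixed by induction. Writing $\sigma(x)=\sum_k(\sigma(x))_{[k]}$ and comparing degrees in $[\sigma(x)-x,\partial]=0$, each non-negative component $(\sigma(x))_{[k]}$ with $k\neq i$ annihilates $X(m,n)_{[-1]}$ and hence vanishes by Lemma \ref{l3}, while the degree-$i$ part must equal $x$. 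So $\sigma(x)=x+N(x)$ with $N(x)\in X(m,n)_{[-1]}\oplus X(m,n)_{[-2]}$.

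It then remains to kill the lower-order tail $N(x)$, and here the clean uniform device is that $N$ is $X(m,n)_{[0]}$-linear. Applying the homomorphism property to $d\in X(m,n)_{[0]}$ gives $[d,\sigma(x)]=\sigma([d,x])$; since $[d,x]$ again lies in $X(m,n)_{[i]}$, the previous paragraph (applied to $[d,x]$) gives $\sigma([d,x])=[d,x]+N([d,x])$, and subtracting yields $[d,N(x)]=N([d,x])$. Thus $N$ is a homomorphism of $X(m,n)_{[0]}$-modules from $X(m,n)_{[i]}$ into $X(m,n)_{[-1]}\oplus X(m,n)_{[-2]}$. The hard part, and the only place where the individual structure of each series $X$ enters, will be to show this Hom-space is zero. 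This follows from the description in \cite{k2}: as $X(m,n)_{[0]}$-modules the bottom pieces $X(m,n)_{[-1]}$ (the standard module) and $X(m,n)_{[-2]}$ (one-dimensional) share no composition factor with any positive-degree piece $X(m,n)_{[i]}$, $i\geq 1$. Concretely, for $X=W$ one has $X(m,n)_{[i]}\cong S^{i+1}V\otimes V^{*}$ as a $\mathfrak{gl}(m|n)$-module, whose constituents never contain the standard module $V^{*}\cong X(m,n)_{[-1]}$ once $i\geq 1$, and the remaining series are treated analogously. Hence $N=0$, completing the inductive step.

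Finally, since every element of $X(m,n)$ is a finite sum of homogeneous components and $\sigma$ fixes each graded subspace, I conclude $\sigma=\mathrm{id}$, so the only multiplicative Hom-Lie superalgebra structure is trivial. I expect the module non-vanishing in the third step to be the genuine obstacle: it is exactly where transitivity is powerless — the tail $N(x)$ sits in negative degrees, outside the hypothesis of Lemma \ref{l3} — and where one is forced to invoke the representation theory of $X(m,n)_{[0]}$ on the local part.
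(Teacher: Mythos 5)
Your skeleton (Propositions \ref{p1} and \ref{p2}, then climb to positive degrees) matches the paper's, but your mechanism for the climb is genuinely different, and it contains the gap you yourself flagged. The paper does not induct and does not use (\ref{e1}) at the top step at all: for $x\in X(m,n)_{[l]}$, $l\geq 1$, and $y,z\in X(m,n)_{[-1]}\oplus X(m,n)_{[0]}$, it subtracts the ordinary super Jacobi identity from (\ref{e3}) (using $\sigma(y)=y$, $\sigma(z)=z$) to get $[\sigma(x)-x,[y,z]]=0$, and then concludes $\sigma(x)=x$ in one line. You instead use (\ref{e1}) plus induction to get $\sigma(x)=x+N(x)$ with $N(x)\in X(m,n)_{[-1]}\oplus X(m,n)_{[-2]}$, and reduce everything to the claim $\mathrm{Hom}_{X(m,n)_{[0]}}\bigl(X(m,n)_{[i]},\,X(m,n)_{[-1]}\oplus X(m,n)_{[-2]}\bigr)=0$. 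That claim is the crux, and you only establish it for $X=W$; moreover your $W$-argument is secretly a central-character argument, valid because the degree operator $\sum_j x_j\partial_j$ lies in $W(m,n)_{[0]}$. For $X=S,H,HO,SHO$ the grading element is \emph{not} inner (the Euler field has nonzero divergence and is not Hamiltonian), so ``the remaining series are treated analogously'' is exactly the missing proof: for $SHO$, say, $X(m,n)_{[0]}$ is of periplectic type with a badly non-semisimple module category, and excluding a common composition factor with the standard module requires a real weight argument, not an analogy. So as it stands your proposal is a correct reduction plus an unproven (though true) representation-theoretic assertion.

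Two further remarks. First, your diagnosis that the negative-degree tail is the true obstacle is vindicated by the paper itself: its closing assertion ``then $\sigma(x)-x=0$'' is too quick for $X=K,KO,SKO$, because $\mathrm{D_{X}}(1)\in X(m,n)_{[-2]}$ commutes with \emph{every} bracket $[y,z]$ above --- with $X(m,n)_{[-1]}$ and $X(m,n)_{[-2]}$ for degree reasons, and with $[X(m,n)_{[0]},X(m,n)_{[0]}]$ because the character through which $X(m,n)_{[0]}$ acts on the one-dimensional space $X(m,n)_{[-2]}$ vanishes on commutators; so Lemma \ref{l3} alone cannot finish. Second, there is a repair that avoids the module theory entirely and would complete your induction (and tighten the paper): (i) the relation $[N(x),[X(m,n)_{[0]},X(m,n)_{[0]}]]=0$, obtained by the paper's subtraction trick from (\ref{e3}), kills $N(x)_{[-1]}$, since the annihilator of $[X(m,n)_{[0]},X(m,n)_{[0]}]$ in $X(m,n)_{[-1]}$ is an $X(m,n)_{[0]}$-submodule, is proper by Lemma \ref{yuanl1}, and $X(m,n)_{[-1]}$ is an irreducible $X(m,n)_{[0]}$-module (\cite{k2}); (ii) for $X=K,KO,SKO$ the degree-type element $E$ (the element of $X(m,n)_{[0]}$ involving $x_{\nu}$ in the paper's list) is fixed by Proposition \ref{p2}, acts on $X(m,n)_{[-2]}$ by the scalar $-2$, and its deviation from the degree operator (present only for $SKO$) annihilates $X(m,n)_{[-2]}$; your equivariance identity $N([E,x])=[E,N(x)]$ then gives $l\,N(x)=-2\,N(x)$, so $N(x)=0$ since $l\geq 1$. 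With (i) and (ii) substituted for the composition-factor claim, your argument closes and is in fact more careful than the one printed in the paper.
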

\begin{proof} Let $(X(m,n),\sigma)$ be a multiplicative Hom-Lie superalgebra structure and $\sigma \neq 0.$ By Propositions \ref{p1} and \ref{p2}, we have    $$\sigma\mid_{X(m,n)_{[-1]}\oplus X(m,n)_{[0]}}=\mathrm{id}\mid_{X(m,n)_{[-1]}\oplus X(m,n)_{[0]}}.$$
Now let $x\in X(m,n)_{[l]}$ and  $y, z\in X(m,n)_{[-1]}\oplus X(m,n)_{[0]},$  where $l\geq1.$ By Eq. (\ref{e3}), we have
\begin{eqnarray*}
[\sigma(x)-x,[y,z]]=0.
\end{eqnarray*}
 Then
$\sigma(x)-x=0.$ We get $\sigma=\mathrm{id}.$ The proof is complete.
\end{proof}


\begin{thebibliography}{99}

\bibitem{hls} J. T. Hartwig, D. Larsson, S. D. Silvesrov.
Derformations of the Lie  algebras using $\sigma$-derivations.
\textit{J.  Algebra,} \textbf{295}  (2007): 314--361.

 \bibitem{ls1} D. Larsson, S. D. Silvesrov.
Quasi-hom-Lie  algebras, Central extensions and 2-cocycle-like
identities. \textit{J.  Algebra,} \textbf{288}  (2005): 321--344.

\bibitem{ls2}  D. Larsson, S. D. Silvesrov.
Quasi-derformations of $sl_{2}(\mathbb{F})$  using twisted
derivations. \textit{Comm. Algebra,} \textbf{35} (2007):
4303--4318.

\bibitem{ls3} D. Larsson, S. D. Silvesrov.
Graded quasi-Lie algebras. \textit{Czechoslovak J. Phys.,}
\textbf{55} (2005): 1473--1478.
\bibitem{jl} Q. Jin, X. Li. Hom-Lie algebra stuctures on semi-simple Lie algebras.  \textit{J. Algebra,}
\textbf{319}  (2008):
1398--1408.

\bibitem{bm} S. Benayadi, A. Makhlouf. Hom-Lie  algebras with symmetric invariant nondegenerate bilinear forms.  \textit{arXiv: 1009.4226,} (2010).

\bibitem{s} Y. H. Sheng. Representations of Hom-Lie  algebras.  \textit{Algebr. Represent. Theory,} DOI: 10.1007/s10468-011-9280-8.

\bibitem{k1} V. G. Kac. Lie superalgebras. \textit{Adv. Math.,}
 \textbf{26} (1977): 8--96.

\bibitem{k2} V. G. Kac. Classification of infinite dimensional
simple linearly compact Lie superalgebras. \textit{Adv. Math.,}
\textbf{139} (1998): 1--55.
\bibitem{am} F. Ammar, A. Makhlouf. Hom-Lie superalgebras and Hom-Lie admissible superalgebras.  \textit{J. Algebra,}
\textbf{324}  (2010):
1513--1528.

\bibitem{cl} B. T. Cao, L. Luo.  Hom-Lie superalgebra structures on finite-dimensional simple Lie superalgebras. \textit{arXiv:1203.0136} (2012).
\end{thebibliography}
\end{document}